\documentclass[12pt]{amsart}

\usepackage[margin=2.3cm]{geometry}
\usepackage[utf8]{inputenc}
\usepackage{amsmath,amssymb,amsthm,mathtools}
\usepackage[colorlinks=true,linkcolor=blue,citecolor=blue,urlcolor=blue]{hyperref}

\newtheorem{theorem}{Theorem}
\newtheorem{lemma}{Lemma}

\theoremstyle{definition}

\newtheorem{example}{Example}
\theoremstyle{remark}

\newcommand{\R}{\mathbb{R}}

\newcommand{\Z}{\mathbb{Z}}
\newcommand{\N}{\mathbb{N}}

\DeclareMathOperator{\dist}{dist}
\DeclareMathOperator{\Id}{Id}
\newcommand{\nn}[1]{\left\|#1\right\|}
\allowdisplaybreaks[1]

\title[Continued fractions entropy and base $b$ entropy]{Simultaneously small continued fraction entropy and base $b$ entropy}
\author{Faruk Temur}
\address{Department of Mathematics\\
Izmir Institute of Technology, Urla, Izmir, 35430, Turkey}
\email{faruktemur@iyte.edu.tr}
\subjclass[2020]{Primary 11A55, 11K16; Secondary 37B10}
\keywords{Continued fractions, block complexity, entropy, rich numbers}
\date{September 14, 2026}

\begin{document}

\begin{abstract}
We construct a computable irrational number that simultaneously has  small continued fraction entropy  and base $b$ entropy for every integer base $b\geq 2.$ This addresses a question raised by Bugeaud.
\end{abstract}

\maketitle

\section{Introduction}

A real number can be represented both by an integer base expansion and
by a continued fraction. The number of distinct finite blocks occurring
in either representation gives a measure of its combinatorial complexity. Yann Bugeaud, in his now classical book \cite{Bugeaud2012},  raised the question of simultaneously controlling these two types of complexity.

Let $b\geq2$ be an integer, and let $\xi\in(0,1)$ be irrational. The base $b$ expansion of $\xi$ is given by
$\xi=0.a_1a_2a_3\ldots$ with $0\leq a_j\leq b-1$.
The block complexity of this expansion is
\begin{equation*}\label{eq:base-complexity}
 p(n,\xi,b)=\#\{a_ka_{k+1}\cdots a_{k+n-1}:k\geq1\},
 \qquad n\geq1.
\end{equation*}
The number $\xi$ is called \emph{rich} to base $b$ if
$p(n,\xi,b)=b^n$ for every $n\geq1$. Thus richness means that every finite
word over the digit alphabet occurs in the expansion. The entropy to base
$b$ is
\begin{equation*}\label{eq:base-entropy}
 E(\xi,b)=\lim_{n\to\infty}\frac{\log p(n,\xi,b)}{n}.
\end{equation*}
The limit exists because the sequence $\log p(n,\xi,b)$ is subadditive.

For the continued fraction $\xi=[0;c_1,c_2,\ldots]$, we similarly put
\begin{equation}\label{eq:cf-entropy}
 p(n,\xi)=\#\{c_kc_{k+1}\cdots c_{k+n-1}:k\geq1\},
 \qquad
 E(\xi)=\lim_{n\to\infty}\frac{\log p(n,\xi)}{n}.
\end{equation}
All continued fractions constructed below have bounded partial quotients,
so the quantities in \eqref{eq:cf-entropy} are nonnegative real numbers.

Bugeaud  formulates the following assertion in  Chapter 10 of his book \cite{Bugeaud2012}.

\medskip
\noindent\textbf{Problem 10.53.}
\emph{There is an irrational real number $\xi$ such that $E(\xi)<\log2$ and
$E(\xi,b)<\log b$ for some integer $b\geq2$ (respectively, for every integer
$b\geq2$).}
\medskip

That uncountably many such numbers exist follows by combining a result of  Broderick et al. \cite{BBFKW2010} with 
 the  theory of cookie-cutter sets.  Corollary 1.2 of \cite{BBFKW2010} states that if a set $K$ supports a measure $\mu$ with a power law, that is, there exist $k_1,k_2,\gamma,\rho_0>0$ such that for all $x\in \text{supp}\mu$, and $0<\rho<\rho_0$ we have
 \begin{equation*}
 k_1 \rho^{\gamma}\leq	\mu(B(x,\rho))\leq k_2\rho^{\gamma},
 \end{equation*}
 then the set of points in $K$ that are badly approximable and such that,
 for every $b \geq 2$, their base $b$ expansion does not contain more than $C(x, b)$
 consecutive identical digits has the same Hausdorff dimension as $K$. If we choose 
 \begin{equation*}
 	K:=\{[0;1,a_1,1,a_2,1,a_3,\ldots]: a_j\in \{1,2\}\},
 \end{equation*}
 the elements $\xi$ of this set plainly satisfy $E(\xi)\leq (\log 2)/2$. This is a Gauss-Cantor set, or a cookie-cutter set, and it is the attractor of the  iterated function system generated by the  two maps
 \begin{equation}\label{eqT}
 	f_1=T_1\circ T_1, \qquad \qquad  f_2=T_1\circ T_2,   \qquad  \text{where}  \qquad T_a(t)=\frac{1}{a+t},
 \end{equation} 
 as applying $T_a$ to $\xi$ introduces $a$ to the beginning of the  continued fraction expansion of $\xi.$ It is well known that a cookie-cutter set has a positive Hausdorff dimension $s$,  its $s$-dimensional  Hausdorff  measure is positive and finite, 
 and restricted to it, satisfies a power law with the  power $s$, see for example the classic book \cite{kf} Theorem 5.3, or \cite{bedford,mu,akt}. In particular numerical evaluation of the Bowen formula gives that our $K$ has Hausdorff dimension $s\approx0.3057$. Therefore the set of points  $\xi\in K$ which are badly approximable and satisfying $E(\xi,b)<\log b$  have  the same Hausdorff dimension $s\approx0.3057$ by \cite{BBFKW2010}.

The purpose of this note is to  construct a computable  number satisfying the requirements of Bugeaud's problem. 
 Our main result is the following. 

\begin{theorem}\label{thm:all-bases}
There is a computable irrational number
\begin{equation}\label{eq:cf-pattern}
 \xi=[0;1,1+\varepsilon_1,1,1,\,
          1,1+\varepsilon_2,1,1,\,
          1,1+\varepsilon_3,1,1,\ldots],
 \qquad \varepsilon_j\in\{0,1\},
\end{equation}
such that, for every integer $b\geq2$ and every $k\geq0$,
\begin{equation}\label{eq:all-base-bound}
 \nn{b^k\xi}>
 \delta_b:=154^{-2^{b}}.
\end{equation}
In particular,
\begin{equation}\label{eq:main-entropies}
 E(\xi)\leq\frac{\log2}{4}<\log2,
 \qquad E(\xi,b)<\log b\quad(b\geq2).
\end{equation}
After $N$ stages, the construction gives a rational interval containing
$\xi$ and having length at most $20^{-N}$.
\end{theorem}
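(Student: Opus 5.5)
The plan is to construct $\xi$ stage by stage, choosing one bit $\varepsilon_N$ at each stage. All the work lies in showing that a suitable bit always exists.

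\textbf{Setup and the entropy consequences.} After $N$ stages the prefix $[0;1,1+\varepsilon_1,1,1,\dots,1,1+\varepsilon_N,1,1]$ determines a cylinder. Its points are the values $\xi$ of the form \eqref{eq:cf-pattern} with those first $4N$ partial quotients, and they form a rational interval $I_N$ whose endpoints come from the convergents $p_{4N}/q_{4N}$ and $(p_{4N}+p_{4N-1})/(q_{4N}+q_{4N-1})$.

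Write $T_a$ as in \eqref{eqT}. The two children of $I_N$ are obtained by composing with $T_1T_{1+\varepsilon}T_1T_1$ for $\varepsilon=0,1$. From $q_{n+1}=c_{n+1}q_n+q_{n-1}$ one gets $|I_{N+1}|\le |I_N|/20$, which gives the last assertion of Theorem~\ref{thm:all-bases}. The same recursion shows that the two children are separated by a gap of at least $c_0|I_N|$ for an absolute constant $c_0$, computed from the four-letter Möbius maps. Everything is explicit rational arithmetic, so $\xi=\bigcap_N I_N$ is computable.

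The entropy bound $E(\xi)\le(\log 2)/4$ is immediate: a block of length $4m$ is determined by its phase modulo $4$ and at most $m+1$ bits, so $p(n,\xi)\ll n\,2^{n/4}$. Next, \eqref{eq:all-base-bound} implies that the base $b$ expansion of $\xi$ never contains $1+\lfloor\log(1/\delta_b)/\log b\rfloor$ consecutive zeros or consecutive digits $b-1$. Hence some word is missing, and by submultiplicativity $E(\xi,b)<\log b$. So it remains to secure \eqref{eq:all-base-bound}.

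\textbf{Reduction to avoiding bad intervals.} The condition $\nn{b^k\xi}>\delta_b$ says that $\xi$ avoids every bad interval $B=(a/b^k-\delta_b b^{-k},\,a/b^k+\delta_b b^{-k})$. Assign each bad interval to the first stage $N$ with $|I_N|\le C\delta_b b^{-k}$. At that stage $|B|$ is comparable to $|I_N|$, but $B$ can still be made smaller than the gap $c_0|I_{N-1}|$ between siblings by a suitable choice of the constant. Then $B$ meets at most one child of $I_{N-1}$, and choosing the other child avoids $B$ forever.

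Bad intervals that are much longer than the current cylinder must already have been excluded when they were assigned. Bad intervals that are much shorter than the current cylinder are postponed until their own stage. The rule is therefore: at stage $N$, pick $\varepsilon_N$ so that the child avoids every bad interval assigned to stage $N$ that meets $I_{N-1}$.

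\textbf{Main obstacle: conflicts.} This rule fails only if two assigned bad intervals hit both children of the same cylinder. I would handle conflicts in two steps.

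First, two distinct points $a/b^k\ne a'/b'^{k'}$ that both lie near $I_{N-1}$ differ by at least $1/(b^kb'^{k'})$. Since $b^k\approx\delta_b/|I_N|$, this separation rules out a conflict between two constraints from the same base. It also rules out conflicts between bases for which $\delta_b\delta_{b'}$ is not too small relative to $|I_N|$.

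Second, for the remaining pairs, the doubly exponential choice $\delta_b=154^{-2^b}$ gives $\delta_{b+1}=\delta_b^2$. This lets a base $b'$ constraint be shifted to a slightly later stage, which remains admissible because its bad interval is then still tiny compared with $I_{N-1}$ but not yet below $\delta_{b'}$-scale resolution. The shifted constraint lands in slots left free by the coarser bases, with the constant $154$ absorbing $20$, $c_0$ and the separation constants.

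Making this scheduling precise is the step I expect to be hardest. I would first try a greedy allocation verified by a Hall-type counting bound over windows of consecutive stages. If the naive count diverges because of many large bases, I would use the rational-separation argument to show that only boundedly many constraints per stage actually meet $I_{N-1}$.
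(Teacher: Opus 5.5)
Your setup, the entropy deductions from \eqref{eq:all-base-bound}, the contraction $|I_{N+1}|\le|I_N|/20$, the sibling gap and computability are all fine. The gap is in the one step that carries all the content: you never prove that at every stage some bit $\varepsilon_N$ avoids all bad intervals assigned to that stage.

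Under your assignment rule, every base with $\delta_b\gtrsim|I_N|$ (roughly $\log_2 N$ of them) can have constraints assigned to stage $N$. Your separation bound $|a/b^k-a'/b'^{k'}|\ge 1/(b^kb'^{k'})\approx|I_N|^2/(\delta_b\delta_{b'})$ beats $|I_{N-1}|$ only when $\delta_b\delta_{b'}\lesssim|I_N|$. For a fixed pair of small bases such as $2$ and $3$ this fails at all large stages. So nothing prevents a point $a/2^k$ and a point $a'/3^{k'}$ from sitting next to opposite children of $I_{N-1}$, killing both.

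The remedy you outline (shifting constraints using $\delta_{b+1}=\delta_b^2$, a greedy allocation, a Hall-type count, a bound on how many constraints meet $I_{N-1}$) is a plan rather than an argument. You never say where a displaced constraint goes. You do not check that its fixed-width bad interval is still short compared with the new cylinder, although it grows relative to the cylinder at each later stage. Nor do you show that every stage ends up with mutually compatible constraints.

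The missing idea, which is how the paper proceeds, is to never mix bases within a stage, and to use the fact that for fixed $b$ the grids $b^{-k}\Z$ are nested. At stage $j$ only the base $b_j=2+v_j$ is treated, where $2^{v_j}$ is the exact power of $2$ dividing $j$. Thus base $b$ is treated precisely at the stages $2^{b-2}+m2^{b-1}$, $m\ge0$.

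At such a stage one avoids the single grid $Q_j^{-1}\Z$, where $Q_j$ is the largest power of $b_j$ not exceeding $L_{j-1}^{-1}$. Its spacing is at least $L_{j-1}$, so Lemma~\ref{lem:cf-avoidance} always supplies a child at distance $>L_{j-1}/128$ from it. This single choice disposes of all powers $b_j^k\le Q_j$ at once, so conflicts cannot arise.

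For $q=b^k$, take the first stage $j$ treating $b$ with $q\le L_{j-1}^{-1}$. Minimality together with the contraction bound over one period $D_b=2^{b-1}$ gives $qL_{j-1}>154^{-D_b}$; at the first visit use $q\ge1$ instead. Hence $\nn{q\xi}>154^{-2^{b-1}}/128>\delta_b$. So the doubly exponential $\delta_b$ is dictated by the period of this schedule, which is affordable because $\sum_{b\ge2}2^{-(b-1)}=1$, rather than by fitting individual bad intervals around one another.
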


The construction  at each step chooses one of two intervals, indexed by $\varepsilon_{j}\in\{0,1\}$, of rational endpoints  so as to avoid a grid   $b^{-k}\Z$. The need to handle all bases simultaneously forces us to choose a rather sophisticated schedule for dealing with bases $b$, in which every base  appears infinitely often and with required frequency. 

The next section introduces the notation and preliminaries. The third and final section proves the theorem.

\section{Notation and Preliminaries}

In what follows we will use the following conventions. We let $\Z_+$ denote the nonnegative integers.
For a closed interval $I$, its length is denoted by $|I|$. For nonempty
sets $A,B\subset\R$, we write
\begin{equation*}
	\dist(A,B)=\inf\{|x-y|:x\in A,\ y\in B\}.
\end{equation*}
A real number is computable if it admits rational approximations to any
prescribed positive rational accuracy by a finite algorithm.

We start with a lemma to recall the relations between entropy to base $b$, missing words, and distance to the nearest integer.

\begin{lemma}\label{lem:missing-word}
Let $b\geq2$, and suppose that the base-$b$ expansion of an irrational
number $\xi\in(0,1)$ omits a word of length $\ell\geq1$. Then
\begin{equation}\label{eq:missing-word-entropy}
 E(\xi,b)\leq\frac{\log(b^\ell-1)}{\ell}<\log b.
\end{equation}
In particular, the word $0^\ell$ is omitted if
$\nn{b^k\xi}>\delta$ for every $k\geq0$ and $b^{-\ell}\leq\delta$.
\end{lemma}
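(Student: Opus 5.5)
The proof has two independent parts: the entropy bound from a missing word, and the criterion showing that $0^\ell$ is omitted.

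\textbf{Entropy bound.} Suppose the word $w$ of length $\ell$ never occurs in the base-$b$ expansion $a_1a_2\ldots$ of $\xi$. Take $n=m\ell$. Every block $a_k\cdots a_{k+m\ell-1}$ splits into $m$ consecutive subblocks of length $\ell$, and none of these equals $w$. So each block belongs to the set of concatenations of $m$ words from $\{0,\ldots,b-1\}^\ell\setminus\{w\}$, which gives
\[
p(m\ell,\xi,b)\le (b^\ell-1)^m .
\]
The limit defining $E(\xi,b)$ exists, as noted in the introduction, so it may be computed along the subsequence $n=m\ell$. This yields $E(\xi,b)\le \lim_{m}\frac{m\log(b^\ell-1)}{m\ell}=\frac{\log(b^\ell-1)}{\ell}$. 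The strict inequality $\frac{\log(b^\ell-1)}{\ell}<\log b$ holds because $b^\ell-1<b^\ell$.

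\textbf{Criterion for omitting $0^\ell$.} I argue by contraposition. Suppose $0^\ell$ occurs, say $a_{k+1}=\cdots=a_{k+\ell}=0$ for some $k\ge0$. Then
\[
\{b^k\xi\}=0.a_{k+1}a_{k+2}\cdots
\]
has its first $\ell$ digits equal to zero, so $0\le\{b^k\xi\}\le b^{-\ell}$. Since $\xi$ is irrational, its expansion is not eventually constant at $b-1$, and so the bound is in fact strict: $\{b^k\xi\}<b^{-\ell}$. Because $\|x\|\le\{x\}$, this gives $\|b^k\xi\|<b^{-\ell}\le\delta$, which contradicts the hypothesis $\|b^k\xi\|>\delta$.

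There is no real obstacle. The only points needing care are passing to the subsequence $n=m\ell$, which is justified by the existence of the limit, and the endpoint strictness in the second part, which follows from irrationality. The strictness is not even essential: the non-strict bound $\|b^k\xi\|\le b^{-\ell}\le\delta$ already contradicts $\|b^k\xi\|>\delta$.
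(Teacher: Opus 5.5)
Your proof is correct and follows the paper's argument essentially verbatim. You split blocks of length $m\ell$ into $m$ blocks of length $\ell$ to get $p(m\ell,\xi,b)\le(b^\ell-1)^m$, and you use a block of zeros to force $\{b^k\xi\}\le b^{-\ell}\le\delta$, contradicting $\{b^k\xi\}\ge\|b^k\xi\|>\delta$; your extra strictness remark via irrationality is harmless but, as you note yourself, unnecessary.
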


\begin{proof}
Every occurring word of length $m\ell$ splits into $m$ words of length
$\ell$, none of which is the omitted word. Hence
\begin{equation*}
 p(m\ell,\xi,b)\leq(b^\ell-1)^m.
\end{equation*}
Taking logarithms, dividing by $m\ell$, and letting $m$ tend to infinity
proves \eqref{eq:missing-word-entropy}. For the last assertion, a block of $\ell$ zeros beginning at
position $k+1$ would give $\{b^k\xi\}\leq b^{-\ell}\leq\delta$, contrary
to $\{b^k\xi\}\geq\nn{b^k\xi}>\delta$.
\end{proof}

We now recall some basic concepts and facts on continued fractions, for further details consult \cite{Bugeaud2012}.  We call   $[0;a_1,a_2,\ldots, a_h,\ldots]$ continued fractions with prefix $(a_1,a_2,\ldots a_h)$. The set of all continued fractions with a certain prefix is called the cylinder of that prefix. We recall the maps $T_a$ defined in \eqref{eqT}. For $a\in \N$, these maps are smooth injections of the interval $[0,1]$ into itself, but they reverse orientation, as  their first derivatives are strictly negative. Below we will work with words of even length, and therefore will compose an even number of these maps, and thus our maps will always have strictly positive first derivative. 
 For a finite word of  positive integers
$W=(a_1,\ldots,a_h)$ with even length, we define
\begin{equation*}
 F_W=T_{a_1}\circ\cdots\circ T_{a_h},\qquad \qquad \qquad
 I_W=F_W([0,1]).
\end{equation*}
We also allow the empty word, with $F_{\varnothing}=\Id$ and
$I_{\varnothing}=[0,1]$. Write $p_j/q_j=[0;a_1,\ldots,a_j]$, with the
initial values $p_{-1}=1$, $p_0=0$, $q_{-1}=0$, $q_0=1$. The usual
continued fraction recursions give
\begin{equation}\label{eq:cf-map}
 F_W(t)=\frac{p_h+p_{h-1}t}{q_h+q_{h-1}t},
 \qquad  \qquad
 |I_W|=\frac1{q_h(q_h+q_{h-1})}.
\end{equation}
These identities also hold for the empty word.  We have $0\leq q_{h-1}\leq q_h$, and
\begin{equation}\label{eq:cf-distortion}
 \frac{F_W'(t)}{|I_W|}
 =\frac{q_h(q_h+q_{h-1})}{(q_h+q_{h-1}t)^2},
 \qquad \qquad
 \frac {|I_W|}{2}\leq F_W'(t)\leq2|I_W|
 \quad(0\leq t\leq1).
\end{equation}
Thus the distortion estimate is independent of the length of the prefix.

Consider the two four-letter words
\begin{equation*}
 W_0=(1,1,1,1),\qquad \qquad   \qquad  W_1=(1,2,1,1).
\end{equation*}
Their maps and  closures of their cylinders are
\begin{equation*}\label{eq:two-cf-maps}
 \begin{aligned}
 \varphi_0(t)&=\frac{2t+3}{3t+5},  \qquad \qquad
 &A_0&=\varphi_0([0,1])=\left[\frac35,\frac58\right],\\
 \varphi_1(t)&=\frac{3t+5}{4t+7},
 &A_1&=\varphi_1([0,1])=\left[\frac57,\frac8{11}\right].
 \end{aligned}
\end{equation*}
In particular,
\begin{equation}\label{eq:cf-first-geometry}
 |A_0|=\frac1{40},\qquad |A_1|=\frac1{77},\qquad
 \dist(A_0,A_1)=\frac5{56}.
\end{equation}

\begin{lemma}\label{lem:cf-avoidance}
Let $F$ correspond to an even length continued fraction prefix, including
the empty prefix, and put $I=F([0,1])$. For $i=0,1$, let
$J_i=F(A_i)$. Then
\begin{equation}\label{eq:cf-contraction}
 \frac {|I|}{154}\leq |J_i|\leq\frac{|I|}{20}.
\end{equation}
If $G$ is a grid with spacing at least $|I|$, at least one of the two
children  $J_0,J_1$ satisfies
\begin{equation}\label{eq:cf-avoidance}
 \dist(J_i,G)>\frac {|I|}{128}.
\end{equation}
\end{lemma}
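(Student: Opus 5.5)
The plan is to deduce both parts of the lemma from the distortion estimate \eqref{eq:cf-distortion} and the explicit geometry \eqref{eq:cf-first-geometry} of $A_0$ and $A_1$. The map $F$ has positive derivative, because it corresponds to a prefix of even length. So for any subinterval $A\subset[0,1]$, the image $F(A)$ is an interval whose length $\int_A F'$ lies between $|A|\,|I|/2$ and $2|A|\,|I|$.

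\textbf{The length bounds \eqref{eq:cf-contraction}.} Apply this with $A=A_0$ and $A=A_1$.
\begin{itemize}
\item Since $|A_0|=1/40$, we get $|I|/80\le |J_0|\le |I|/20$.
\item Since $|A_1|=1/77$, we get $|I|/154\le |J_1|\le 2|I|/77$, and $2/77<1/20$.
\end{itemize}
Combining the two gives \eqref{eq:cf-contraction}. No further work is needed here.

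\textbf{The avoidance statement \eqref{eq:cf-avoidance}.} I will argue by contradiction, using two geometric facts that both come from the distortion bound.

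First, the gap between $A_0$ and $A_1$ is $5/56$. Its image under $F$ is the gap between $J_0$ and $J_1$, so
\[
\dist(J_0,J_1)\ \ge\ \frac{5}{56}\cdot\frac{|I|}{2}\ =\ \frac{5|I|}{112}.
\]

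Second, $A_0\cup A_1\subset[3/5,8/11]$, an interval of length $7/55$. Hence $J_0\cup J_1$ lies in an interval of length at most $14|I|/55$.

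Now suppose both $\dist(J_0,G)\le |I|/128$ and $\dist(J_1,G)\le|I|/128$. The intervals are compact and a grid is locally finite, so we may pick $x_i\in J_i$ and $g_i\in G$ with $|x_i-g_i|\le|I|/128$. There are two cases.
\begin{itemize}
\item If $g_0=g_1$, then $|x_0-x_1|\le |I|/64<5|I|/112$. This contradicts the gap estimate.
\item If $g_0\ne g_1$, then the spacing hypothesis gives $|g_0-g_1|\ge|I|$. But
\[
|g_0-g_1|\le |x_0-x_1|+\frac{|I|}{64}\le \Bigl(\frac{14}{55}+\frac1{64}\Bigr)|I|<|I|,
\]
which is again a contradiction.
\end{itemize}
So one of $J_0,J_1$ satisfies $\dist(J_i,G)>|I|/128$.

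\textbf{Main obstacle.} There is no real obstacle. The only point requiring care is the numerics: checking that $1/64<5/112$, and that the span of $J_0\cup J_1$ plus $1/64$ stays below $1$. Both margins are comfortable.
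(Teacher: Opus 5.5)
Your proof is correct and follows the paper's argument: both get the length bounds by integrating the distortion estimate, and both show that the $|I|/128$-enlargements of $J_0,J_1$ are disjoint (gap at least $5|I|/112$) while their convex hull is shorter than the grid spacing, so at most one grid point can lie near a child. The only cosmetic difference is that you bound the hull length directly by $(14/55+1/64)|I|$, using the upper distortion bound on $[3/5,8/11]$, whereas the paper uses the lower bound on the margins $F([0,3/5])$ and $F([8/11,1])$ to place the hull strictly inside $I$.
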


\begin{proof}
By merely integrating \eqref{eq:cf-distortion} over the intervals in question   we obtain \eqref{eq:cf-contraction}, and also the  following. The distance between the children is at
least $5|I|/112$. Their convex hull has distance at least $3|I|/10$ from the
left endpoint of $I$, and at least $3|I|/22$ from its right endpoint.
Enlarging each child by $|I|/128$ on both sides therefore gives two disjoint
closed intervals whose convex hull lies strictly inside $I$. That hull
has length less than $|I|$, so it contains at most one point of $G$.
Consequently at least one enlarged child misses $G$, which proves
\eqref{eq:cf-avoidance}.
\end{proof}

We also record how the grid-avoidance condition can be checked exactly.
For rational $u\leq v$, rational $d>0$, and an integer $Q\geq1$, the
condition $\dist([u,v],Q^{-1}\Z)>d$ says that the closed interval
$[Q(u-d),Q(v+d)]$ contains no integer. It is therefore equivalent to
\begin{equation}\label{eq:grid-test}
	\left\lceil Q(u-d)\right\rceil
	>\left\lfloor Q(v+d)\right\rfloor.
\end{equation}
In particular, it is a finite rational comparison.

\section{Proof of the main theorem}

We now prove Theorem 1. Any $\xi$ of the form in \eqref{eq:cf-pattern} satisfies the desired conclusion on continued fraction entropy $E(\xi).$ Hence, our main  aim is to choose $\varepsilon_j$ so that the resulting $\xi$ satisfies \eqref{eq:all-base-bound}.  At each $j$, we  choose $\varepsilon_j$ so that $\xi$ will lie in an interval that avoids the grid $b^{-k}\Z$ for some base $b\geq 2$ and some power $k\in \N$. These intervals  will of course be nested, and their intersection is the desired $\xi$. The most crucial part of the following construction is to decide which base $b$ and which power $k$ to handle at a given step $j.$ In \eqref{eq:all-base-bound} we have information on all powers of all bases, and  moreover the right hand side is independent of $k$. To get this information any base $b$  appears at infinitely many steps, and with sufficient frequency. Once we schedule which base to consider at each step $j$ appropriately, Lemma 2 will allow us choose an appropriate subinterval to avoid  powers of this base up to some level. At this interval choosing step, our intervals have rational endpoints, and we are avoiding a grid of the form  $b^{-k}\Z$. So each step reduces to comparing two rational numbers, and therefore is a  finite computation.

\begin{proof}[Proof of Theorem~\ref{thm:all-bases}]
We choose the base $b$ to handle at step $j$ according to the following schedule. For $j\geq1$, let $v_j$ be the largest integer  such that
$2^{v_j}$ divides $j$. At stage $j$,  we address the base
$
 b_j=2+v_j.
$
Thus the schedule begins $2,3,2,4,2,3,2,5,\ldots$ For a fixed base
$b\geq2$, its assigned stages form the progression
\begin{equation*}\label{eq:assigned-stages}
 \tau_b,\ \tau_b+D_b,\ \tau_b+2D_b,\ldots,
 \qquad \tau_b=2^{b-2},\quad D_b=2^{b-1}.
\end{equation*}
The bounded gap for each fixed base is what makes \eqref{eq:all-base-bound} independent of $k$.

We will  construct a sequence of maps $F_j,  j\in \Z_+$ on the interval $[0,1]$.   As the base  step, we take $F_0=\Id.$  The 
 step  $j\in \N$ of this construction will determine $\varepsilon_{j}$, we will define 
 $F_j=F_{j-1}\circ \varphi_{\varepsilon_j}$, and continue. So we will indeed have  $F_j=\varphi_{\varepsilon_1}\circ\cdots\circ \varphi_{\varepsilon_j}$.

  Given $F_{j-1}$, let
$I_{j-1}=F_{j-1}([0,1])$ and $L_{j-1}=|I_{j-1}|$, and set
\begin{equation*}\label{eq:all-base-grid}
 Q_j=\max\{b_j^k:k\in\Z_+,\ b_j^k\leq L_{j-1}^{-1}\},
 \qquad G_j=Q_j^{-1}\Z.
\end{equation*}
The spacing of $G_j$ is at least $L_{j-1}$. Define $\varepsilon_j$ to be
the smallest $i\in\{0,1\}$ such that
\begin{equation}\label{eq:cf-choice}
 \dist(F_{j-1}(A_i),G_j)>\frac{L_{j-1}}{128},
\end{equation}
and put
\begin{equation*}\label{eq:cf-recursion}
 F_j=F_{j-1}\circ\varphi_{\varepsilon_j},
 \qquad I_j=F_j([0,1]),\qquad L_j=|I_j|.
\end{equation*}
Lemma~\ref{lem:cf-avoidance} shows that this choice always exists and that
\begin{equation}\label{eq:all-lengths}
 \frac{L_{j-1}}{154}\leq L_j\leq\frac{L_{j-1}}{20},
 \qquad 154^{-j}\leq L_j\leq20^{-j}.
\end{equation}
The nested intervals have a unique common point $\xi$. Their prefixes
show that its continued fraction is \eqref{eq:cf-pattern}. As its continued fraction expansion is infinite,  $\xi$ is irrational.

Fix $b\geq2$ and $q=b^k$, where $k\geq0$. Let $j$ be the first stage
assigned to $b$ at which
\begin{equation}\label{eq:first-covering-stage}
 q\leq L_{j-1}^{-1}.
\end{equation}
Such a stage exists by \eqref{eq:all-lengths}. Since $Q_j$ is a power of
$b$ at least as large as $q$, the inclusion $q^{-1}\Z\subseteq G_j$ and
\eqref{eq:cf-choice} give
\begin{equation}\label{eq:all-scale-bound}
 \nn{q\xi}>\frac{qL_{j-1}}{128}.
\end{equation}

If $j>\tau_b$, the preceding assigned stage is $j-D_b$. Minimality in
\eqref{eq:first-covering-stage} gives
$q>L_{j-D_b-1}^{-1}$. Applying the lower contraction bound in
\eqref{eq:all-lengths} over $D_b$ stages yields
\begin{equation*}
 qL_{j-1}>
 {L_{j-1}}/{L_{j-D_b-1}}\geq154^{-D_b}.
\end{equation*}
If $j=\tau_b$, then $q\geq1$ and
\begin{equation*}
 qL_{j-1}\geq L_{j-1}\geq154^{-(\tau_b-1)}\geq154^{-D_b}.
\end{equation*}
Together with \eqref{eq:all-scale-bound}, this proves
\eqref{eq:all-base-bound}, including the case $k=0$.

For each $b\geq2$, define the explicit integer
\begin{equation}\label{eq:missing-length}
 \ell_b=\min\{\ell\in\N:b^\ell\geq154^{\,2^{b}}\}.
\end{equation}
Then $b^{-\ell_b}\leq\delta_b$. By Lemma~\ref{lem:missing-word}, the
base $b$ expansion omits $0^{\ell_b}$ and
\begin{equation}\label{eq:all-base-entropy}
 E(\xi,b)\leq\frac{\log(b^{\ell_b}-1)}{\ell_b}<\log b.
\end{equation}
This proves \eqref{eq:main-entropies}.

Finally, every $F_j$ has integer coefficients and rational endpoint
values. The integer $Q_j$ is found by successively multiplying by $b_j$,
and the test in \eqref{eq:cf-choice} is the rational test
\eqref{eq:grid-test}. The choice of $\varepsilon_j$ is therefore effective.
The rational intervals $I_N$ have length at most $20^{-N}$ and contain
$\xi$, giving the asserted approximation bound.
\end{proof}

\begin{example}
Running the algorithm on a computer,   the first six choices $\varepsilon_j$ are zero and
$\varepsilon_7=1$. Our $\xi$ begins  $\xi=0.618033988759272844854423252344\ldots.$
\end{example}

\end{document}